\numberwithin{equation}{section}
\theoremstyle{definition}
\newtheorem{thm}{Theorem}[section]
\newtheorem{dfn}[thm]{Definition}
\newenvironment{Ack}%
{\par \vspace{\baselineskip}%
 \noindent \textbf{Acknowledgements.}}%
{\par \vspace{\baselineskip}}%
\newcommand{\wt}{\widetilde}
\newcommand{\bs}{\backslash}
\newcommand{\midd}{\, \vert \,}
\newcommand{\lto}{\longrightarrow}
\newcommand{\lmto}{\longmapsto}
\newcommand{\lsto}{\xrightarrow{\ \sim \ }}
\newcommand{\bbC}{\mathbb{C}}
\newcommand{\bbF}{\mathbb{F}}
\newcommand{\bbN}{\mathbb{N}}
\newcommand{\bbP}{\mathbb{P}}
\newcommand{\bbZ}{\mathbb{Z}}
\newcommand{\bfA}{\mathbf{A}}
\newcommand{\bfH}{\mathbf{H}}
\newcommand{\bfK}{\mathbf{K}}
\newcommand{\frS}{\mathfrak{S}}
\newcommand{\fsl}{\mathfrak{sl}}
\DeclareMathOperator{\id}{id}
\DeclareMathOperator{\tr}{tr}
\DeclareMathOperator{\GL}{GL}
\DeclareMathOperator{\SU}{SU}
\DeclareMathOperator{\TF}{TF}
\DeclareMathOperator{\ind}{ind}
\DeclareMathOperator{\End}{End}
\DeclareMathOperator{\Hom}{Hom}
\DeclareMathOperator{\Ind}{Ind}
\DeclareMathOperator{\Irr}{Irr}
\DeclareMathOperator{\sgn}{sgn}
\newcommand{\fl}[1]{\lfloor #1 \rfloor}
\newcommand{\hg}[2]{{}_{#1} F_{#2}}
\newcommand{\abs}[1]{\left| #1 \right|}
\newcommand{\gen}[1]{\langle #1 \rangle}
\newcommand{\rst}[2]{\left. #1 \right|_{#2}}
\newcommand{\pair}[2]{\left< #1,#2 \right>}
\begin{document}

\title{Gelfand pairs and spherical functions for Iwahori-Hecke algebras}
\author{Shintarou Yanagida}
\date{2021.04.25}
\keywords{Iwahori-Hecke algebras, Gelfand pairs, zonal spherical functions}
\thanks{This work is supported by supported by JSPS KAKENHI Grant Number 19K03399, %
 and also by the JSPS Bilateral Program %
 ``Elliptic algebras, vertex operators and link invariant".}
\address{Graduate School of Mathematics, Nagoya University
Furocho, Chikusaku, Nagoya, 464-8602, Japan.}
\email{yanagida@math.nagoya-u.ac.jp}

%\maketitle

\begin{abstract}
We introduce the notion of Gelfand pairs and zonal spherical functions for 
Iwahori-Hecke algebras.
\end{abstract}

\maketitle

%%%%%%%%%%%%%%%%%%%%%%%%%%%%%%%%%%%%%%%%%%%%%%%%%%%%%%%%%%%%%%%%%%%%%%%%%%%%%%%%%%%%%%%%%%
%%%%%%%%%%%%%%%%%%%%%%%%%%%%%%%%%%%%%%%%%%%%%%%%%%%%%%%%%%%%%%%%%%%%%%%%%%%%%%%%%%%%%%%%%%
\section{introduction}\label{s:intro}

This short note is a spin off of the collaboration work \cite{HHY} 
with Masahito Hayashi and Akihito Hora, and it was originally planned to be included
as Appendix D of that paper.
The aim is to give an analogue of the theory of spherical functions
for Iwahori-Hecke algebras.
To explain the motivation, let us briefly explain the parts of \cite{HHY}
which concern this note.

In \cite{HHY}, a certain discrete probability distribution is introduced,
which originates in the irreducible decomposition of the $\SU(2)$-$\frS_n$-bimodule 
\begin{align}\label{eq:intro:SW}
 (\bbC^2)^{\otimes n} = \sum_{x=0}^{\fl{n/2}} U_{(n-x,x)} \boxtimes V_{(n-x,x)},
\end{align}
i.e., the tensor space appearing in the classical Schur-Weyl duality of 
the special unitary group $\SU(2)$ and the symmetric group $\frS_n$.
Here we denoted by $V_{(n-x,x)}$ the irreducible $\frS_n$-representation 
associated to the partition $(n-x,x)$, 
and by $U_{(n-x,x)}$ the corresponding irreducible $\SU(2)$-representation.
The distribution has four non-negative integer parameters $n,m,k,l$,
and the probability mass function is denoted by $p(x)=p(x \midd n,m,k,l)$
for $x=0,1,\dotsc,\fl{n/2}$.

The function $p(x)$ has surprisingly rich property. For example, 
it can be written as a summation of Hahn polynomials \cite[Theorem 2.2.1]{HHY}.
Hahn polynomials are $\hg{3}{2}$-hypergeometric orthogonal polynomials,
and appear in the computation as values of zonal spherical functions 
for the Gelfand pair $(\frS_n, \frS_m \times \frS_{n-m})$.
In addition, the function $p(x)$ can be written by a single Racah polynomial 
\cite[Theorem 2.2.2]{HHY}, which is a $\hg{4}{3}$-hypergeometric orthogonal polynomial 
sitting in the first line of the Askey scheme.
Moreover, the cumulative distribution function $\sum_{u=0}^x p(u)$ 
can also be written by a terminating $\hg{4}{3}$-hypergeometric series 
\cite[Theorem 2.2.4]{HHY}.
In the course of deriving these theorems, 
some hypergeometric summation formulas are also obtained 
\cite[Corollaries 2.2.3 and 2.2.5]{HHY}.

These arguments have natural $q$-analogues, which are discussed in \cite[Appendix C]{HHY}.
There is introduced a $q$-analogue of $p(x)$ \cite[(C.2.11)]{HHY}, 
and are obtained the corresponding basic hypergeometric summation formulas 
\cite[Corollaries C.3.2 and C.3.4]{HHY}.
Although we may say that those computations are natural $q$-analogues,
the setting is a little bit artificial in the viewpoint of representation theory.
In \cite[Appendix C]{HHY}, instead of considering 
the $n$-tensor space \eqref{eq:intro:SW}, we consider 
\begin{align}
 (\bbC^2)^{\bbP^1(\bbF_q^n)} = (\bbC^2)^{[n]_q},	
\end{align}
where $\bbF_q$ denotes the finite field of order $q$, 
$\bbP^1(\bbF_q^n)$ denotes the projective line for the linear space $\bbF_q^n$,
and $[n]_q=1+q+\dotsb+q^{n-1}$ is the $q$-integer.
In other words, we take the formula $\abs{\bbP^1(\bbF_q^n)}=[n]_q$ of the number of points
as a clue of $q$-deformation.
As for the group action, 
instead of the permutation action of $\frS_n$ on $(\bbC^2)^{\otimes n}$, 
we consider the action of the Chevalley group $\GL(n,\bbF_q)$ 
on $(\bbC^2)^{\bbP^1(\bbF_q^n)}$ which is induced by 
the natural action of that on $\bbF_q^n$.
The usage of Hahn polynomials as zonal spherical functions 
for the Gelfand pair $(\frS_n, \frS_m \times \frS_{n-m})$
is then replaced by $q$-Hahn polynomials as zonal spherical functions 
for the pair $(\GL(n,\bbF_q), P(m,n-m,\bbF_q))$,
where $ P(m,n-m,\bbF_q)$ denotes the maximal parabolic subgroup of 
$\GL(n,\bbF_q)$ with block sizes $m$ and $n-m$.

As mentioned at the end of the introduction of \cite[Appendix C]{HHY},
we have another candidate of $q$-analogue for \eqref{eq:intro:SW}:
the $q$-Schur-Weyl duality between 
the quantum group $U_{\sqrt{q}}(\fsl_2)$ and the Iwahori-Hecke algebra $H_{q}(\frS_n)$
acting on the same tensor space $(\bbC^2)^{\otimes n}$ as \eqref{eq:intro:SW}, 
discovered by Jimbo \cite{J}.
This one looks more natural in the representation theoretic viewpoint.

However, there are several missing items in this $q$-Schur-Weyl duality to
make a $q$-analogue of the arguments of \cite{HHY}.
One of them is 
the theory of Gelfand pairs and spherical functions for Iwahori-Hecke algebras.
As sketched above, we used in \cite{HHY} the corresponding theory for finite groups,
for which we refer \cite[\S 7.1]{M}.
Since Iwahori-Hecke algebras are deformation of group algebras of Coxeter groups, 
it is natural to expect such a theory exists,
although we could not find it in literature.

The purpose of this note is to give the start line of the theory of Gelfand pairs
and zonal spherical functions for Iwahori-Hecke algebras, 
which will be demonstrated in \S \ref{s:main}.
The main objects introduced are Gelfand pairs in Definition \ref{dfn:Gp}
and zonal spherical functions in Definition \ref{dfn:zsf}.
The main Theorem \ref{thm:zsf} gives fundamental properties of 
our zonal spherical functions, which can be regarded as 
a Hecke-analogue of the properties for finite groups \cite[VII.1, (1.4)]{M}.

Below is the list of notations used in the main text.
\begin{enumerate}[nosep]
\item
We denote by $\bbN := \bbZ_{\ge 0} = \{0,1,2,\ldots\}$ the set of non-negative integers.

\item
For a finite set $S$, we denote by $\abs{S}$ the number of elements in $S$.

\item
For a group $G$, we denote by $e \in G$ the unit element.
\end{enumerate}

%%%%%%%%%%%%%%%%%%%%%%%%%%%%%%%%%%%%%%%%%%%%%%%%%%%%%%%%%%%%%%%%%%%%%%%%%%%%%%%%%%%%%%%%%%
%%%%%%%%%%%%%%%%%%%%%%%%%%%%%%%%%%%%%%%%%%%%%%%%%%%%%%%%%%%%%%%%%%%%%%%%%%%%%%%%%%%%%%%%%%
\section{Recollection on Iwahori-Hecke algebra}\label{s:pr}

As a preliminary of the main \S \ref{s:main}, we recall several basic notions 
on the structure and representation theories of Hecke algebras.
Our main reference is \cite{GP}.
Hereafter, Iwahori-Hecke algebras will be just called Hecke algebras.

%%%%%%%%%%%%%%%%%%%%%%%%%%%%%%%%%%%%%%%%%%%%%%%%%%%%%%%%%%%%%%%%%%%%%%%%%%%%%%%%%%%%%%%%%%
\subsection{Generic Hecke algebra}\label{ss:gH}

Here we explain our notation for the Iwahori-Hecke algebra,% of type A,
borrowing terminology and symbols from \cite{GP}.
We make one major change on symbols: 
we denote the generic Hecke algebra by $\bfH$,
whereas it is denoted by $\bfK \bfH$ in \cite{GP}.

Let $W$ be a finite Coxeter group with generating set $S$.
We denote by $\ell(w)$ the length of a minimal expression of $w \in W$ with respect to $S$.
Let $A$ be a unital commutative ring, and take a parameter set 
$\{a_s \mid s \in S\} \subset A$ such that $a_s=a_t$ if $s,t \in S$ are conjugate in $W$.
We denote by 
\begin{align}
 H = H_A(W,S,\{a_s \mid s \in S\})
\end{align}
the Iwahori-Hecke algebra associated to the Coxeter system $(W,S)$ over $A$,
and simply call it the Hecke algebra.
%Let us show the definition again.
It is a unital associative $A$-algebra 
with generators $\{T_s \mid s \in S\}$ and relations
\begin{align*}
\begin{cases}
       T_s^2 = (a_s-1)T_s+a_s  &(s \in S), \\
 (T_s T_t)^{m_{s t}} = 1 &(s,t \in S, \, s \neq t, \, m_{s t} < \infty).
\end{cases}
\end{align*}
The first relation is called the quadratic relation.
In the second relation we denoted by $m_{s,t}$ the order of the element $s t$ in $W$.
Let us recall some basic properties of $H$.
For the detail and proofs we refer \cite[4.4.3, 4.4.6]{GP}.
\begin{enumerate}[nosep]
\item 
For each $w \in W$, take a reduced expression $w=s_{i_1} \dotsm s_{i_l}$.
Then 
\begin{align}
 T_w := T_{s_{i_1}} \dotsm T_{s_{i_l}} \in H
\end{align}
is independent of the chose of the reduced expression.
In particular, we have $T_e=1$ for the unit $e \in W$.
Also the quadratic relation is rewritten as the factorized form $(T_s+1)(T_s-u)=0$.

\item
For $s \in S$ and $w \in W$, we have 
\begin{align}
 T_s T_w = 
 \begin{cases} 
  T_{s w} & (\ell(s w)>\ell(w)) \\ (a_s-1)T_{s w}+u T_w & (\ell(s w)<\ell(w))
 \end{cases}.
\end{align}

\item
As an $A$-module, $H$ is free and finitely generated, 
and has an $A$-basis $\{T_w \mid w \in W\}$.
\end{enumerate}
We always assume $a_s$ are invertible in $A$. 
Then the basis element $T_w$ is invertible in $H$ for any $w \in W$ by \cite[8.1.1]{GP}.

We recall two types of modules over the Hecke algebra $H$.
The first one is the index representation given by 
\begin{align}\label{eq:ind}
 \ind\colon H \lto A, \quad T_w \lmto a_w \quad (w \in W),
\end{align}
and the second one is the sign representation given by 
\begin{align}\label{eq:sgn}
 \sgn\colon H \lto A, \quad T_w \lmto (-1)^{\ell(w)} \quad (w \in W).
\end{align}
The well-definedness of these modules are shown by the quadratic relation 
$(T_s+1)(T_s-u)=0$.

Next we cite from \cite[\S 8.1]{GP} the specialization argument of the Hecke algebra.
Let $\{u_s \mid s \in S\}$ be a set of indeterminates over $\bbC$
such that $u_s=u_t$ for $s,t \in S$ conjugate in $W$.
We set  $\bfA := \bbZ[u^{\pm 1}]$.
The generic Hecke algebra $H_\bfA$ is defined to be the Hecke algebra over $\bfA$:
\begin{align}
 H_\bfA := H_{\bfA}(W,S,\{u_s \mid s \in S\}).
\end{align}
Now assume that there is  a ring map $\theta\colon \bfA \to \bbC$ with 
$\theta(u_s)=q \in \bbC \setminus \{0\}$, which will be called a specialization map.
The map $\theta$ induces a $\bbC$-algebra, which is denoted by
\begin{align}
 H_q := H_\bfA \otimes_{\bfA} \bbC.
\end{align}
Then by \cite[8.1.7]{GP}, there exists a finite Galois extension 
$\bfK \supset \bbC(u)$ such that the $\bfK$-algebra
\begin{align}
 \bfH := H_{\bfA} \otimes_{\bfA} \bfK
\end{align}
is semisimple and isomorphic to the group algebra $\bfK[W]$.
Abusing the terminology, we also call $\bfH$ the generic Hecke algebra.
Moreover, if the parameter $q \in \bbC \setminus \{0\}$ is taken such that $H_q$ 
is semisimple, then the specialization map $\bfH \to H_q$ induces a bijection
\begin{align}\label{eq:Irr:bij}
 \Irr(\bfH) \lsto \Irr(H_q)
\end{align}
between the sets of isomorphism classes of finite-dimensional simple modules 
over $\bfH$ and $H_q$ respectively.
In particular, for a semisimple $H_q$, the set $\Irr(H_q)$ is independent of $q$.

It is known that for a generic complex number $q$, the algebra $H_q$ is semisimple.
A particular case is $q=1$, where we have $H_1=\bbC[W]$,
and thus isomorphism classes of finite-dimensional simple $\bfH$-modules 
correspond bijectively to those of finite-dimensional irreducible representations of $W$.
Also note that the index representation \eqref{eq:ind} and 
the sign representation \eqref{eq:sgn} correspond respectively to the trivial and 
sign representations of the Coxeter group $W$ under the specialization $q=1$.

The generic Hecke algebra $\bfH$ over $\bfK$ is split semisimple 
in the sense of \cite[Chap.\ 7]{GP}.
Let us explain what it means.
Since $\bfH$ is a semisimple $\bfK$-algebra,
there is a direct sum decomposition $\bfH = \bigoplus_V \bfH(V)$,
where $V$ runs over the simple $\bfH$-modules, 
and each $\bfH(V)$ is a simple $\bfK$-algebra.
Thus $\bfH(V)$ is isomorphic to a matrix algebra $M_{n_V}(D_V)$,
where $n_V$ is the multiplicity of $V$ in the decomposition of $\bfH$ 
as a module over itself, and $D_V$ is a division algebra over $\bfK$.
We also have $D_V \simeq \End_{\bfH}(V)$ and $\dim_{\bfK}V = n_V \dim_{\bfK}D_V$.
Now every simple $\bfH$-module $V$ is split simple. i.e., $\dim_{\bfK} D_V=1$.
This is the defining condition of split semisimplicity.

Let us close this subsection by the case $W=\frS_n$ with
%We use the same symbols for the symmetric group $W=\frS_n$ in \S \ref{ss:qSW:H}.
$S=\{s_1,\dotsc,s_{n-1}\}$ being the set of transpositions $s_i=(i,i+1)$.
All of the generators $s_i$ are conjugate in $W$, and 
the associated Hecke algebra $H$ has a unique parameter $q$.
Similarly, the generic Hall algebra $\bfH$  has a unique parameter $u$.
%and the pair $(W,S)$ is a Coxeter system.
The Hecke algebra $H_q(\frS_n)$ mentioned in \S \ref{s:intro} 
is nothing but $H_{\bbC}(W,S,q)$.
As for the simple modules, the set $\Irr(\bfH)$ of 
finite-dimensional simple $\bfH$-modules sits in a series of bijections 
\begin{align}\label{eq:Irr=Par}
 \Irr(\bfH) \lsto \Irr(H_1) \lsto \{\lambda \vdash n\},
\end{align}
where $\lambda \vdash n$ means that $\lambda$ is a partition of $n$.
We will denote by $V_\lambda$ the simple $\bfH$-module 
corresponding to $\lambda \vdash n$ under \eqref{eq:Irr=Par}.
Note that the index representation \eqref{eq:ind} is $V_{(n)}$
and the sign representation $\sgn$ \eqref{eq:sgn} is $V_{(1^n)}$.

%%%%%%%%%%%%%%%%%%%%%%%%%%%%%%%%%%%%%%%%%%%%%%%%%%%%%%%%%%%%%%%%%%%%%%%%%%%%%%%%
\subsection{Schur elements}\label{ss:c_V}

We continue to use the symbols in the previous subsection.
Thus $H$ denotes the Hecke algebra $H$ associated to 
a finite Coxeter system $(W,S)$ defined over a commutative ring $A$.
%The symbol $\bfK$ denotes the generic Hecke algebra over 
%the finite Galois extension $\bfK \supset K(u_s \mid s \in S)$. 
%a specialization map $\bfH \to H_q$ induces a bijection 
%between the sets of isomorphism classes of finite-dimensional simple modules.

We recall trace functions and their relation to the center $Z(H)$ of $H$,
referring \cite[\S 7.1]{GP} for the detail.
An $A$-linear map $f \in H^* := \Hom_A(H,A)$ is called a trace function on $H$ if 
$f(h h')=f(h' h)$ for any $h, h' \in H$.
We denote the $A$-module of trace functions on $H$ by
\begin{align}
 \TF(H) := \{f\colon H \to A \mid \text{trace functions on $H$}\}.
\end{align}
$H$ is a symmetric algebra in the sense of \cite[7.1.1]{GP}, i.e., 
it is equipped with a trace function $\tau\colon H \to A$ such that
the $A$-bilinear form 
\begin{align}\label{eq:tau()}
 H \otimes_A H \lto A, \quad  h \otimes h' \lmto \tau(h h')
\end{align}
is non-degenerate.
Such a map $\tau$ is explicitly given by 
\begin{align}
 \tau\colon H \lto A, \quad \tau(T_e)=1, \quad \tau(T_w)=0 \quad (w \neq e).
\end{align}
We sometimes denote it by $\tau_H$ for the distinction. 

For $w \in W$ with a reduced expression $w=s_1 \dotsm s_l$, $s_i \in S$,
we set $a_w := a_{s_1} \dotsm a_{s_l} \in A$.
By \cite[8.1.1]{GP}, we have 
\begin{align}\label{eq:T^vee_w}
 \tau(T_w^\vee T_{w'}) = \delta_{w,w'}, \quad 
 T_w^{\vee} := a_w^{-1} T_{w^{-1}} \in H \quad (w,w' \in W).
\end{align}
%Thus $\{T_w^\vee \mid w \in W \}$ is the dual basis of $\{T_w \mid w \in W\}$.
Then for $f \in H^*$,  we set 
\begin{align}\label{eq:f^*}
 f^* := \sum_{w \in W} f(T_w) T_w^\vee \in H.
\end{align}
By \cite[7.1.7]{GP}, we have
\begin{align}\label{eq:TF=ZH}
 f \in \TF(H) \iff f^* \in Z(H).
\end{align}

An important class of trace functions is given by the character of an $H$-module.
Let us recall the definition.
Let $V$ be an $H$-module which is finitely generated and free as an $A$-module, 
%We will call such $V$ an $H$-lattice, following the terminology in \cite[7.1.4]{GP}.
and $\rho_V\colon H \to \End_K(V)$ be the corresponding $A$-algebra homomorphism.
The character of $V$ is defined to be the $A$-linear map 
\begin{align}
 \chi_V\colon H \lto A, \quad \chi_V(h) := \tr_V \rho_V(h),
\end{align}
where $\tr_V$ denotes the usual matrix trace.
By the property of matrix trace, 
the character is a trace function, i.e., $\chi_V \in \TF(H)$.

Next we introduce Schur elements, following \cite[\S 7.2]{GP}.
Hereafter we consider the Hecke algebra $H$ defined over a field $K$,
instead of a commutative ring $A$.
For $H$-modules $V,V'$ and $\varphi \in \Hom_K(V,V')$, 
we define $I(\varphi) \in \Hom_K(V,V')$ by 
\begin{align}
 I(\varphi)(v) := \sum_{w \in W}\varphi(v T_w) T_w^\vee \quad (v \in V).
\end{align}
By \cite[7.1.10]{GP}, we have $I(\varphi) \in \Hom_H(V,V')$.
Now assume that $V$ is a split simple $H$-module as explained in the previous subsection.
Then by \cite[7.2.1]{GP}, there is a unique element $c_V \in K$ such that 
\begin{align}
 I(\varphi) = c_V \tr_V(\varphi) \id_V
\end{align}
for any $\varphi \in \End_K(V)$, 
depending only on the isomorphism class of $V$.
The element $c_V$ is called the Schur element associated to $V$.
%By \cite[7.2.6]{GP}, if $H$ is split semisimple,
%then for any finite finite-dimensional simple $H$-module $V$, 
%the Schur element $c_V$ is non-zero, and we have 
%$\tau=\sum_{V \in \Irr{H}} c_V^{-1} \chi_V$.

Let us explain representation theoretic meaning of Schur elements.
Let $V,V'$ be finite-dimensional simple $H$-modules. 
%afforded by $A$-algebra homomorphisms 
%$\rho\colon H \to M_{\dim V}(A)$ and $\rho'\colon H \to M_{\dim {V'}}(A)$.
Then by \cite[7.2.2]{GP}, we have 
%\begin{align}
% \sum_{w \in W} \rho(T_w)_{i l}\rho'(T_w^\vee)_{k j} = 
% \begin{cases}
%  \delta_{i,j} \delta_{k,l} c_V & (V \simeq V') \\ 0 & (V \not\simeq V')
% \end{cases}.
%\end{align}
%Taking the matrix trace, we obtain 
\begin{align}\label{eq:chi-orth}
 \sum_{w \in W} \chi_V(T_w) \chi_{V'}(T_w^\vee) = \delta_{V,V'} c_V \dim_K V ,
\end{align}
where $\delta_{V,V'}=1$ if $V \simeq V'$, and $\delta_{V,V'}=0$ otherwise.
Following \cite[8.1.8]{GP}, we define the Poincar\'e polynomial $P_W$ 
of the Coxeter group $W$ to be the Schur element $c_{\ind}$ 
associated to the index representation $\ind$ \eqref{eq:ind} 
of the generic Hecke algebra $\bfH$. 
By \eqref{eq:chi-orth} and \eqref{eq:T^vee_w}, it is given by 
\begin{align}\label{eq:P_W}
 P_W := c_{\ind} = \sum_{w \in W} \ind(T_w) \ind(T_w^\vee) 
      = \sum_{w \in W} \ind(T_w) \in \bbN[u_s \mid s \in S].
\end{align}

Schur elements are also related to idempotents of 
the generic Hecke algebra $\bfH$, as shown in \cite[7.2.7]{GP}.
Since $\bfH$ is split semisimple as explained in the previous \S \ref{ss:gH}, 
it has a decomposition
\begin{align}
 \bfH = \bigoplus_{V \in \Irr(\bfH)} \bfH(V)
\end{align}
into simple $\bfK$-algebras $\bfH(V)$, 
where $V$ runs over finite-dimensional simple $\bfH$-modules,  
and $\bfH(V) \simeq M_{n_V}(\bfK)$ with $n_V := \dim_{\bfK} V$.
We have the corresponding decomposition of the unit $1$ of $\bfH$:
\begin{align}
 1 = \sum_{V \in \Irr(\bfH)} e_V, \quad e_V \in \bfH(V).
\end{align}
Then $e_V$ are central primitive idempotents and mutually orthogonal, 
i.e., $e_V e_{V'} = 0$ if $V \not\simeq V'$.
We have $e_V \bfH \simeq V$ as $\bfH$-modules. 
Moreover, by \cite[7.2.6]{GP}, the Schur element $c_V$ is non-zero 
for any $V \in \Irr(\bfH)$, and using the character $\chi_V$ we have
\begin{align}\label{eq:e_V}
 e_V = \frac{1}{c_V} \sum_{w \in W} \chi_V(T_w) T_w^\vee.
\end{align}
In particular, using \eqref{eq:T^vee_w} and \eqref{eq:P_W}, we can compute the idempotent
$e_{\ind}$ corresponding to the index representation $\ind$ \eqref{eq:ind} as 
\begin{align}\label{eq:e_ind}
 e_{\ind} = \frac{1}{P_W}\sum_{w \in W} T_w. 
\end{align}

For a finite-dimensional simple $\bfH$-module $V$, let $\chi_V^* \in \bfH$ be 
the element associated to the character $\chi_V$ by the correspondence \eqref{eq:f^*}.
Since $\chi_V$ is a trace function, 
we have $\chi_V^* \in Z(\bfH)$ by the equivalence \eqref{eq:TF=ZH}.
Moreover, by \cite[7.2.8]{GP}, the elements 
$\{\chi_V^* \mid V \in \Irr(\bfH)\}$ form a $\bfK$-basis of $Z(\bfH)$, and
\begin{align}\label{eq:chi^*}
 \chi_V^* = c_V e_V.
\end{align}
The idempotence and orthogonality of $e_V$ yields 
\begin{align}\label{eq:chi^*-orth}
 \chi_V^* \chi_{V'}^* = \delta_{V,V'} c_V \chi_V^* \quad (V,V' \in \Irr(\bfH)).
\end{align}

Let us consider the behavior of objects discussed so far under the specialization
$\bfH \to H_{q=1}=\bbC[W]$ induced by $\bfK \to \bbC$, $u \mapsto 1$.
Obviously we have $T_w \mapsto w$ and $T_w^\vee \mapsto w^{-1}$ for each $w \in W$.
For a finite-dimensional $\bfH$-module $V$, 
we denote by $V_{q=1}$ the representation of $W$ induced by 
the bijection \eqref{eq:Irr:bij} of finite-dimensional simple modules.
Then by \cite[Example 7.2.5]{GP}, the specialization yields
\begin{align}
 c_V \lmto \frac{\abs{W}}{\dim_\bbC V_{q=1}}.
\end{align}
Thus we may say that the Schur element $c_V$ is a ``$q$-analogue" 
of the quantity $\abs{W}/\dim V_{q=1}$.
The relation \eqref{eq:chi-orth} reduces to 
the orthogonal relation for characters $\chi_{V_{q=1}}$ of the finite group $W$:
\begin{align}
 \frac{1}{\abs{W}}\sum_{w \in W} \chi_{V_{q=1}}(w) \chi_{V'_{q=1}}(w^{-1}) 
 = \delta_{V,V'}.
\end{align}
The Poincar\'e polynomial $P_W = c_{\ind}$ \eqref{eq:P_W} reduces to $\abs{W}$,
and the idempotent $e_V$ \eqref{eq:e_V} for a simple $\bfH$-module $V$ reduces to 
\begin{align}
 e_{V_{q=1}} = 
 \frac{\dim_\bbC V_{q=1}}{\abs{W}} \sum_{w \in W} \chi_{V_{q=1}}(w) w^{-1},
\end{align}
which is the primitive idempotent associated to 
the irreducible representation $V_{q=1}$ of $W$.
In particular, the idempotent $e_{\ind}$ for the index representation reduces to
$\abs{W}^{-1} \sum_{w \in W} w$, the idempotent of the group algebra $\bbC[W]$
corresponding to the trivial representation of $W$.

We close this subsection by an explicit formula of Schur elements in the case $W=\frS_n$.
Recall \eqref{eq:Irr=Par} that finite-dimensional simple $\bfH$-modules are 
parametrized by partitions of $n$.
Let $\lambda=(\lambda_1,\lambda_2,\dotsc)$ be such a partition.
We denote by $\ell=\ell(\lambda)$ the length of $\lambda$, i.e., 
the number of non-zero $\lambda_i$'s.
Let $V_\lambda$ be the simple $\bfH$-module corresponding to $\lambda$,
and $c_\lambda \in \bfK$ be the Schur element associated to $V_\lambda$.
By \cite[9.4.5, 10.5.1, 10.5.2]{GP}, we have 
\begin{align}\label{eq:c_V}
 c_\lambda^{-1} = u^{n(\lambda)} 
 \prod_{1 \le i<j<\ell} \bigl[\wt{\lambda}_i-\wt{\lambda}_j\bigr]_u \, \Big/ \, 
 \prod_{1 \le i \le \ell} \bigl[\wt{\lambda}_i\bigr]_u!
\end{align}
with $n(\lambda):=\sum_{i=1}^{\ell}(i-1)\lambda_i$ 
and $\wt{\lambda}_i := \lambda_i+\ell-i$.
We also used the symbols of $q$-integers and $q$-factorials:
%For $n \in \bbN$ and a letter $q$, 
%we denote the $q$-integer and the $q$-factorial by 
\begin{align}%\label{eq:ntn:qint}
 [n]_q := 1+q+\dotsb+q^{n-1}, \quad [n]_q ! := [1]_q \, [2]_q \dotsm [n]_q.
\end{align}
Note that under the specialization $u \to 1$ we have 
\begin{align}
 \rst{c_\lambda^{-1}}{u \to 1} = 
 \prod_{1 \le i<j<\ell} \bigl(\wt{\lambda}_i-\wt{\lambda}_j\bigr) \, \Big/ \, 
 \prod_{1 \le i \le \ell} \wt{\lambda}_i ! = 
 \frac{\dim V_{\lambda,q=1}}{\abs{W}},
\end{align}
where the second equality is nothing but the hook length formula 
for the dimension of the irreducible representation $V_{\lambda,q=1}$ 
of $W=\frS_n$ corresponding to the partition $\lambda$.

The correspondence $f \mapsto f^*$ yields from \eqref{eq:tau()}
a non-degenerate symmetric bilinear form on $H^*$, which is denoted by 
\begin{align}\label{eq:pair_H*}
 \pair{f}{g}_{H^*} := \tau(f^* g^*) \quad (f,g \in H^*).
\end{align}
Then \eqref{eq:chi-orth} is rewritten by the pairing \eqref{eq:pair_H*} as 
\begin{align}
 \pair{\chi_V}{\chi_{V'}}_{H^*} = c_V \delta_{V,V'},
\end{align}

%%%%%%%%%%%%%%%%%%%%%%%%%%%%%%%%%%%%%%%%%%%%%%%%%%%%%%%%%%%%%%%%%%%%%%%%%%%%%%%%%%%%%%%%%%
\section{Gelfand pairs and zoal spherical functions for Iwahori-Hecke algebras}
\label{s:main}

We give a Hecke analogue of the theory of Gelfand pairs and zonal spherical functions 
for finite groups.
We refer \cite[VII.1]{M} for the theory of finite groups. 
We continue to use the symbols in the previous \S \ref{s:pr}.
So $\bfH$ is the generic Hecke algebra associated to a finite Coxeter system $(W,S)$
defined over the finite Galois extension $\bfK \supset \bbC(u_s \mid s \in S)$.

For a subset $J \subset S$, we call the subgroup $W_J := \gen{J} \subset W$ 
generated by $J$ the parabolic subgroup, following the terminology in \cite[1.2]{GP}.
The corresponding subalgebra 
\begin{align}
 \bfH_J := \gen{T_w \mid w \in W_J}_\bfK \subset \bfH 
\end{align}
is called the parabolic subalgebra of $\bfH$ associated to $J$.
%Similarly we have the parabolic subalgebra $\bfH_J$ of the generic Hecke algebra $\bfH$.
%The algebras $\bfH$ and $\bfH_J$ are split semisimple.

We denote by $e_J \in Z(\bfH_J)$ the idempotent \eqref{eq:e_ind} 
corresponding to the index representation of $\bfH_J$.
We regard it as an element of $\bfH$. 
Thus we have 
\begin{align}
 e_J = \frac{1}{P_J}\sum_{w \in W_J} T_w \in \bfH,
\end{align}
where $P_J:=P_{W_J} \in \bbN[u_s \mid s \in J] \subset \bfK$ 
denotes the Poincar\'e polynomial \eqref{eq:P_W} of $W_J$.
%It is an idempotent, i.e. $e_J^2 = e_J$.

Now we consider the $\bfH$-module $e_J \bfH$.
It is isomorphic to the induced module 
\begin{align}
 \Ind_{\bfH_J}^{\bfH}(e_J \bfH_J) = e_J \bfH_J \otimes_{\bfH_J} \bfH
\end{align}
of the index representation $e_J \bfH_J$.
Since $e_J$ is an idempotent, we have a $\bfK$-algebra isomorphism
\begin{align}
 \End_{\bfH}(e_J \bfH) \lsto e_J \bfH e_J, \quad
 \varphi \lmto \varphi(e_J).
\end{align}

By \cite[9.1.9]{GP}, the multiplicities $m(V)$ of the decomposition 
\begin{align}
 e_J \bfH = \bigoplus_{V \in \Irr(\bfH)} V^{\oplus m(V)}
\end{align} 
into finite-dimensional simple $\bfH$-modules $V$ are preserved 
under the specialization $\bfH \to H_{q=1}$, $u_s \mapsto 1$.
In other words, the multiplicity $m(V)$ is the same as that
in the irreducible decomposition of the representation $1_{W_J}^W$ 
of the finite group $W$ obtained by inducing  
the trivial one-dimensional representation of the subgroup $W_J$.
Now let us recall the notion of Gelfand pair for finite groups \cite[VII.1, (1.1)]{M}:
A pair $(G,K)$ of a finite group $G$ and its subgroup $K$ is called a 
Gelfand pair of finite groups if the induced representation $1_K^G$ is multiplicity-free,
i.e., each of the non-zero multiplicities is one.
Thus the following definition seems to be natural.

\begin{dfn}\label{dfn:Gp}
Let $(W,S)$ be a finite Coxeter system and $J \subset S$ be a subset.
The pair $(S,J)$ or $(\bfH,\bfH_J)$ is called a Gelfand pair 
if the $\bfH$-module $e_J \bfH$ is multiplicity-free, i.e., 
the pair $(W_S,W_J)$ is a Gelfand pair of finite groups.
\end{dfn}

Hereafter we assume $(S,J)$ is a Gelfand pair, and denote by
\begin{align}
 e_J \bfH = \bigoplus_{i=1}^r V_i
\end{align}
the decomposition into simple $\bfH$-modules.
Let $\chi_i$ be the character of $V_i$ 
and $\chi_i^* \in \bfH$ be the element given by \eqref{eq:f^*}.
Then we introduce:

\begin{dfn}\label{dfn:zsf}
For $i=1,2,\dotsc,r$, we define an element $\omega_i \in \bfH$ by 
\begin{align}
 \omega_i := \chi_i^* e_J = e_J \chi_i^*,
\end{align}
where the equality follows from $\chi_i^* \in Z(\bfH)$,
explained in the paragraph of \eqref{eq:chi^*}.
We call $\omega_i$'s the zonal spherical functions of the Gelfand pair $(S,J)$.
\end{dfn}

The zonal spherical functions $\omega_i$ satisfy similar properties as
the zonal spherical functions in the standard sense.
The following lemma is a ``Hecke-analogue" of \cite[VII.1, (1.4)]{M}.

\begin{thm}\label{thm:zsf}
The elements $\omega_i$ ($i=1,2,\dotsc,r$) satisfy the following properties.
\begin{enumerate}[nosep]
\item
Let us express the expansion of $\omega_i$ in terms of the basis $\{T_w \mid w \in W\}$
as $\omega_i = \sum_{w \in W} \omega_i(w) T_w$.
Then $\omega_i(e) = 1$.

\item \label{i:prp:H-zph:2}
$\omega_i \omega_j = \delta_{i,j} c_i \omega_i$, where $c_i := c_{V_i} \in \bfK$
is the Schur element associated to $V_i$ (see \S \ref{ss:c_V}).

\item \label{i:prp:H-zph:3}
$\omega_i \in e_J \bfH e_J \cap V_i$.

\item
Zonal spherical functions $\omega_i$, $i =1,\dotsc,r$, 
form a $\bfK$-basis of $e_J H e_J$.
\end{enumerate}
\end{thm}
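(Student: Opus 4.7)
The plan is to treat the four parts in order, using as the main algebraic inputs the identity $\chi_V^* = c_V e_V$ from \eqref{eq:chi^*}, the orthogonality \eqref{eq:chi^*-orth}, the centrality of $\chi_i^*$, and the idempotence $e_J^2 = e_J$. For (1), I start from the observation that for any $h = \sum_w h(w) T_w \in \bfH$ one has $h(e) = \tau(h)$, since $T_e^\vee = 1$ and $\tau(T_w^\vee T_v) = \delta_{w,v}$ by \eqref{eq:T^vee_w}. Applying this to $\omega_i = \chi_i^* e_J$ with $\chi_i^* = \sum_w \chi_i(T_w) T_w^\vee$ collapses the double sum to
\begin{align}
 \omega_i(e) = \tau(\chi_i^* e_J) = \tfrac{1}{P_J}\sum_{w \in W_J} \chi_i(T_w) = \chi_i(e_J),
\end{align}
and since $e_J$ is idempotent, $\chi_i(e_J) = \tr_{V_i}(e_J) = \dim_\bfK(V_i e_J)$. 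A Hecke-algebraic Frobenius reciprocity for $e_J \bfH = \Ind_{\bfH_J}^{\bfH}(e_J \bfH_J)$ then identifies this with the multiplicity of $V_i$ in $e_J \bfH$, which is $1$ by the Gelfand pair assumption.

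For (2), I compute directly using centrality of $\chi_j^*$, $e_J^2 = e_J$, and \eqref{eq:chi^*-orth}:
\begin{align}
 \omega_i \omega_j = \chi_i^* e_J \chi_j^* e_J = \chi_i^* \chi_j^* e_J = \delta_{i,j} c_i \chi_i^* e_J = \delta_{i,j} c_i \omega_i.
\end{align}
For (3), the twin expressions $\omega_i = \chi_i^* e_J = e_J \chi_i^*$ give $\omega_i = e_J \omega_i e_J \in e_J \bfH e_J$. To place $\omega_i$ inside $V_i$, I rewrite $\omega_i = c_i e_J e_i$ using $\chi_i^* = c_i e_i$ and centrality, and observe that since $e_i$ is the central primitive idempotent of the block of $V_i$, right multiplication by $e_i$ on $e_J \bfH$ projects onto the $V_i$-isotypic summand; by multiplicity-freeness this summand is literally the copy of $V_i$ in the given decomposition, so $\omega_i \in V_i$.

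Part (4) is then a counting argument. Orthogonality from (2), combined with $c_i \neq 0$ (\S \ref{ss:c_V}) and $\omega_i \neq 0$ (from $\omega_i(e) = 1$), yields $\bfK$-linear independence of $\omega_1, \dotsc, \omega_r$ in $e_J \bfH e_J$: any relation $\sum_i a_i \omega_i = 0$, multiplied by $\omega_j$, forces $a_j c_j \omega_j = 0$. To conclude these span, I match dimensions: by the isomorphism $\End_{\bfH}(e_J \bfH) \simeq e_J \bfH e_J$ from \S \ref{s:main}, combined with split semisimplicity (each $V_i$ is split simple and the $V_i$ are pairwise non-isomorphic), $\dim_\bfK e_J \bfH e_J = r$. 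The one step requiring genuine care is the Frobenius reciprocity identification in (1); the fallback is to verify by hand the tensor-Hom adjunction $\Hom_\bfH(V_i, e_J \bfH_J \otimes_{\bfH_J} \bfH) \simeq \Hom_{\bfH_J}(V_i, e_J \bfH_J)$ and identify the $\bfK$-dimension of the right-hand side with $\dim_\bfK(V_i e_J)$.
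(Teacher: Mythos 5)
Your proposal is correct, and its overall architecture (reduce everything to $\chi_i^* = c_i e_i$, the orthogonality \eqref{eq:chi^*-orth}, centrality of $\chi_i^*$, idempotence of $e_J$, and the count $\dim_\bfK e_J\bfH e_J = \dim_\bfK\End_\bfH(e_J\bfH) = r$) matches the paper's proof. The one place where you genuinely diverge is part (1). Both you and the paper reduce to showing $P_J^{-1}\sum_{v\in W_J}\chi_i(T_v)=1$, but the paper justifies this by asserting that $T_v$ acts on $V_i\subset e_J\bfH$ ``by the index representation,'' which as stated concerns left multiplication by $T_v$ on $e_J\bfH$ rather than the module action computing $\chi_i$; the identity $\sum_{v\in W_J}\chi_i(T_v)=\sum_{v\in W_J}a_v$ is true only after summing, not termwise. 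Your route --- $P_J^{-1}\sum_{v\in W_J}\chi_i(T_v)=\chi_i(e_J)=\tr_{V_i}(e_J)=\dim_\bfK(V_i e_J)$, then Frobenius reciprocity $\Hom_\bfH(e_J\bfH,V_i)\simeq V_i e_J$ and multiplicity-freeness --- is the cleaner and more robust justification, and it makes explicit that (1) is exactly where the Gelfand-pair hypothesis enters. The remaining differences are cosmetic: in (4) you derive linear independence from the orthogonality in (2) together with $c_j\neq 0$ and $\omega_j\neq 0$, whereas the paper reads it off from (3) (the $\omega_i$ lie in distinct simple summands); both are valid given $\omega_i(e)=1$.
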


\begin{proof}
\begin{enumerate}[nosep]
\item
By definition we have 
$\omega_i = \left(P_J^{-1} \sum_{v \in W_J} T_v\right)
 \left(\sum_{w \in W}\chi_i(T_w) T_w^\vee\right)$.
Since $\tau(T_v T_w^\vee)=\delta_{v,w}$ by \eqref{eq:T^vee_w},
we have $\omega_i(e)=P_J^{-1} \sum_{v \in W_J} \chi_i(T_v)$.
Since the module $V_i$ lies in the induced module $e_J \bfH$
on which $T_v$ acts by the index representation \eqref{eq:ind},
we have $\sum_{v \in W_J} \chi_i(T_v) = \sum_{v \in W_J}a_v =P_J$.
Thus we have the result.

\item 
Denote by $e_i:=e_{V_i} \in Z(\bfH)$ the idempotent for $V_i$.
By \eqref{eq:chi^*} we have $\chi_i^* = c_i e_i$.
Then the idempotence of $e_J$ and \eqref{eq:chi^*-orth} yields the conclusion.

\item
We continue to use the symbol $e_i$.
Since $e_i \bfH \simeq V_i$ as $\bfH$-modules, 
we have $\omega_i = c_i e_i \chi_i^* \in V_i$.
\eqref{i:prp:H-zph:2} yields $c_i \omega_i = e_J (\chi_i^*)^2 e_J$,
and since $c_i$ is non-zero as explained in the paragraph of \eqref{eq:e_V},
we have $\omega_i \in e_J \bfH e_J$.
Thus we have the statement.

\item
It follows from \eqref{i:prp:H-zph:3} 
that $\omega_i$'s are linearly independent in $e_J \bfH$.
On the other hand, Schur's lemma yields 
$\dim_{\bfK} e_J \bfH e_J =\dim_{\bfK} \End_{\bfH}(e_J \bfH)=r$.
Thus we have the conclusion.
\end{enumerate}
\end{proof}

For the zonal spherical function $\omega_i \in \bfH$ and $w \in W$, 
we define $\omega_i(T_w) \in \bfK$ by the expansion
\begin{align}
 \omega_i = \sum_{w \in W} \omega_i(T_w) T_w^\vee.
\end{align}
The limit $\rst{\omega_i(T_w)}{q=1}$ coincides with the value of 
the zonal spherical function on $(W,W_J)$ at the double coset 
$W_J w W_J \in W_J \bs W/W_J$.
Unfortunately, $\omega_i(T_w)$ is not constant on the double coset $W_J w W_J$,
and at this point we have a discrepancy between Hecke- and classical settings.

As a closing remark, let us recall that there are some explicit formulas of the values of 
zonal spherical functions for the Gelfand pairs of $\frS_n$ and its subgroups,
and for the pairs of $\GL(n,\bbF_q)$ and its parabolic subgroups,
in terms of terminating hypergeometric and basic hypergeometric series, respectively.
See \cite{D} and \cite[Chap.~VII]{M} for the detail.
However, in the case of our zonal spherical functions for Hecke algebras,
we don't have such nice formulas at this moment.
We invite the reader to tackle this problem.

%%%%%%%%%%%%%%%%%%%%%%%%%%%%%%%%%%%%%%%%%%%%%%%%%%%%%%%%%%%%%%%%%%%%%%%%%%%%%%%%%%%%%%%%%%
\begin{Ack}
The author thanks M.~Hayashi and A.~Hora for the fascinating collaboration in \cite{HHY}.
He also thanks Masatoshi Noumi for valuable comments on ``quantized" spherical functions
and hypergeometric functions.
\end{Ack}

%%%%%%%%%%%%%%%%%%%%%%%%%%%%%%%%%%%%%%%%%%%%%%%%%%%%%%%%%%%%%%%%%%%%%%%%%%%%%%%%%%%%%%%%%%
%%%%%%%%%%%%%%%%%%%%%%%%%%%%%%%%%%%%%%%%%%%%%%%%%%%%%%%%%%%%%%%%%%%%%%%%%%%%%%%%%%%%%%%%%%

\end{document}